\theoremstyle{plain}
\newtheorem{theorem}{Теорема}
\newtheorem*{theoA}{Теорема A}
\newtheorem*{theoB}{Теорема B}
\newtheorem{lemma}{\bf Лемма}
\newtheorem{corollary}{\bf Следствие}
\theoremstyle{definition}
\newtheorem{definition}{Определение}
\renewcommand{\leq}{\leqslant}
\renewcommand{\geq}{\geqslant}
\newcommand{\dd}{\,{\rm d}}
\def\RR{\mathbb R}
\def\CC{\mathbb C}
\DeclareMathOperator{\clos}{clos}
\DeclareMathOperator{\Int}{int}
\DeclareMathOperator{\Har}{har}
\DeclareMathOperator{\Hol}{Hol}
\DeclareMathOperator{\Zero}{Zero}
\DeclareMathOperator{\sbh}{sbh}
\DeclareMathOperator{\supp}{supp}
\DeclareMathOperator{\comp}{c}
\DeclareMathOperator{\Meas}{Meas}
\begin{document}

\begin{flushleft} {\normalsize УДК 517.53+517.574+517.987.1}\end{flushleft} 

\begin{center}
{\Large \bf К распределению нулевых множеств\\ голоморфных функций. IV. Один критерий\footnote{Исследование выполнено за счёт гранта Российского научного фонда (проект № 18-11-00002).}}
\end{center}
\begin{center}
{\sc Б.~Н.~Хабибуллин, Э.~Б.~Меньшикова}
\end{center}

 В более общем виде доказывается анонсированный ранее результат \cite[теорема 2]{KhaKhaE19}. 

 В соглашениях из \cite{KhaKhaE19}--\cite{KhaRozKha19} всюду 
$\CC_{\infty}:=\CC\cup \{\infty \}$ --- одноточечная компактификация Александрова комплексной плоскости $\CC$,  $S\subset \CC_{\infty}$ --- {\it борелевское подмножество;\/} $\Int S$, $\clos S$  и $\partial S$ ---  {\it внутренность,\/} {\it замыкание\/} и {\it граница\/} $S$ в  $\CC_{\infty}$. 

Далее $\Meas (S)$ --- класс вещественных борелевских мер (Радона) на $S$, или {\it зарядов,\/} $$\Meas^+ (S):=\{\mu \in \Meas (S)\colon \mu\geq 0 \},$$
 $\Meas_{\comp}^+(S)\subset \Meas^+(S) $ --- класс  мер с компактным носителем в $S$,
   $\delta_z$ --- вероятностная {\it мера Дирака с носителем $\supp \delta_z=\{z\}$}.

 Классы $\sbh(S)$, $\Har(S)$,  $\Hol(S)$ состоят  соотв.\footnote{сокращение для <<соответственно>>} из {\it субгармонических,\/}
 {\it гармонических\/} и {\it голоморфных функций\/} в какой-нибудь открытой окрестности  множества  $S\subset \CC_{\infty}$,
\begin{equation}\label{HolDM}
\Hol(D,M):=\Bigl\{f\in \Hol(D)\colon \sup\limits_D {|f|}{\exp(-M)}<+\infty \Bigr\},  
\end{equation}
где  $D\subset \CC_{\infty}$ --- {\it область,\/} т.\,е.  открытое связное подмножество,
 \begin{equation}\label{M}
M:=M_+-M_-\colon D\to \RR\cup\{\pm\infty \}, \quad 
M_{\pm}\in  \sbh(D), \quad M_{\pm}\not\equiv -\infty
\end{equation}
 --- {\it нетривиальная $\delta$-субгармоническая функция\/} \cite[3.1]{KhaRoz18}
с {\it зарядом Рисса\/}  
\begin{equation}\label{nZ}
\nu_M:=\frac1{2\pi}\Delta M\in \Meas (D), \quad \text{$\Delta$ ---\textit{ оператор Лапласа.}} 
\end{equation}

Всюду ${\sf Z}=\{{\sf z}_k\}_{k=1,2, \dots}\subset D$ --- последовательность точек в области $D$ без предельных точек в $D$ со {\it считающей мерой\/} $$n_{\sf Z}:=\sum\limits_{k}\delta_{{\sf z}_k}.$$  

Выберем
\begin{equation}\label{S0S}
 \begin{split}
\text{\it число }b&>0,\text{а также  {\it множества }} \\
 \varnothing\neq \Int S \subset  \clos S  &\subset \Int S_0 \subset \clos S_0\subset D, \text{ \it  где  $S$ или\/ $\Int S_0$ линейно связно}. 
\end{split}
\end{equation} 
Класс {\it вещественных тестовых функций\/}
 $\sbh_0^{\pm}(D\setminus S,S_0;\leq b)$ состоит из субгармонических функций $v\in \sbh(D\setminus S)$,
удовлетворяющих трем условиям \cite[(2)]{KhaKhaE19}:  
\begin{enumerate}[{1)}]
\item {\it $\lim\limits_{D\ni z'\to z}v(z')=0$ для всех\/ $z\in \partial D$;\/} 
\item   \textit{существует $S_v\subset \clos S_v\subset D$, для которого\/ $v\geq 0$  на\/ $D\setminus S_v$}; 
\item $\sup\limits_{S_0\setminus S}|v|\leq b$.
\end{enumerate}
Рассматриваем и {\it финитный вблизи\/} $\partial D$ подкласс  введенного класса тестовых вещественных функций 
 $\sbh_0^{\pm}(D\setminus S,S_0;\leq b)$  (ср. с 
\cite[(1.12)]{KhaKhaF19_III})
$$\sbh_{00}^{\pm}(D\setminus S,S_0;\leq b)
:=\bigl\{v\in \sbh_0^{\pm} (D\setminus S,S_0;\leq b)\colon 
v\equiv 0\text{  \it вне некоторого $S_v\subset \clos S_v\subset D$}\bigr\}.$$ 

\begin{definition}
[{\textit{аффинного выметания\/} \cite[определение 1]{KhaKhaF19_III},
	 \cite[определение 7.1, (7.11)]{KhaRozKha19}}]\label{df1}  Пусть  $S\subset D$, $\mathcal V$
--- некоторый класс \textit{полунепрерывных сверху\/ } функций 
\begin{equation}\label{v}
v\colon D\setminus S\to \{-\infty \}\cup \RR.
\end{equation}
Рассмотрим заряды $\nu,\mu \in \Meas (D)$. Пишем $\nu\curlyeqprec_{S, \mathcal V} \mu$
(соотв.  ${\sf Z}\curlyeqprec_{S,\mathcal V} \mu$), если для некоторого числа  $C\in \RR$ выполнены неравенства 
\begin{equation}\label{bal}
\begin{split}
\int_{D\setminus S}v\dd \nu&\leq \int_{D\setminus S} v \dd \mu+C\\ 
\biggl(\text{соотв. }\sum_{{\sf z}_k\in D\setminus S} v({\sf z}_k)&\overset{\eqref{nZ}}{:=:}\int_{D\setminus S} v
\dd n_{\sf Z}\leq \int_{D\setminus S} v \dd \mu+C\biggr)
\quad \text{\it для всех $v\overset{\eqref{v}}{\in} \mathcal V$}.
\end{split}
\end{equation}
\end{definition}
\begin{theorem}[{\rm критерий последовательности нулей для $\Hol(D,M)$}]\label{th:cr} Пусть   $D$ односвязная область в $\CC_{\infty}$ и на $\partial D$ более одной точки или $D$ двусвязная  в $\CC_{\infty}$ и на   $\partial D$ более двух точек или  $D$ конечносвязная в $\CC_{\infty}$ с  $\clos D\neq \CC_{\infty}$. Пусть    
функция  $M_+\overset{\eqref{M}}{\in} \sbh (D)$ непрерывна, т.\,е. $M_+\overset{\eqref{M}}{\in} C(D)$. 
Тогда следующие три  утверждения  эквивалентны.
\begin{enumerate}[{\bf z1.}]
\item\label{DM1}
$\sf Z$ --- последовательность нулей для\/ $\Hol(D,M)$, т.\,е.  существует\/ $f\in \Hol(D,M)$ с последовательностью нулей\/ $\Zero_f$, перенумерованной с учётом кратности, со считающей мерой\/  $n_{\Zero_f}\overset{\eqref{nZ}}{=}n_{\sf Z}$  {\rm (пишем  $\Zero_f ={\sf Z}$)}. 
\item\label{DM2} Для любых  объектов из\/ \eqref{S0S}  имеем 
 ${\sf Z}\overset{\eqref{bal}}
 {\curlyeqprec}_{S,\mathcal V_b} \nu_M$ относительно класса 
\begin{equation}\label{Vbz2}
\mathcal V_b{:=} \sbh_0^{\pm}(D\setminus S ,S_0;\leq b).
\end{equation} 

\item\label{DM3} Существуют объекты из \/  \eqref{S0S}, для которых ${\sf Z} \overset{\eqref{bal}}
{\curlyeqprec}_{S,\mathcal V_b} \nu_M$ относительно класса 
\begin{equation}\label{Vbz3}
\mathcal V_b{:=} \sbh_{00}^{\pm}(D\setminus S ,S_0;\leq b)\cap C^{\infty} (D\setminus S ),
\end{equation} 
где $C^{\infty}(D\setminus S)$ --- класс бесконечно дифференцируемых функций
в окрестности $D\setminus S$.
\end{enumerate}
\end{theorem}

Импликация {\bf z\ref{DM1}}$\Rightarrow${\bf z\ref{DM2}} для {\it произвольной области\/} $D\subset \CC^n$ с {\it \underline{субгармонической} функцией\/} 
$M$ доказана в \cite[теорема 1]{KhaKhaE19}. Более общая, чем \cite[теорема 1]{KhaKhaE19},  версия при $n=1$ для  произвольной уже \textit{$\delta$-субгармонической функцией }
  $M\not\equiv \pm \infty $    --- это 
\begin{theorem}\label{th_n}
В условиях  \eqref{S0S} пусть  $u\in \sbh(D)\setminus \{\boldsymbol{-\infty}  \}$ с мерой Рисса $\nu_u:=\frac{1}{2\pi}\Delta u$. 
Если $u\leq M$ на $D$, то $\nu_u\overset{\eqref{bal}}
{\curlyeqprec}_{S,\mathcal V_b} \nu_M$ относительно класса  
$\mathcal V_b\overset{\eqref{Vbz2}}{:=}\sbh_0^{\pm}(D\setminus S ,S_0;\leq b)$.
\end{theorem}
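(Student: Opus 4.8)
\textit{Proof plan.} The plan is to reduce first to a subharmonic (and continuous) majorant, then to strip off a bounded collar of $S$ on which only the bound $|v|\leq b$ is needed, and finally, on the part that reaches $\partial D$, to run a Poisson--Jensen computation over an exhaustion of $D$ in which the boundary contributions die because $v$ vanishes on $\partial D$. For the first reduction, observe that $\mathcal V_b$ from \eqref{Vbz2} and the relation $\curlyeqprec_{S,\mathcal V_b}$ involve neither $u$ nor $M$; putting $u':=u+M_-$ one gets $u'\in\sbh(D)\setminus\{\boldsymbol{-\infty}\}$, $u'\leq M_+$ on $D$ by \eqref{M}, $\nu_{u'}=\nu_u+\nu_{M_-}$ and $\nu_{M_+}=\nu_M+\nu_{M_-}$, so for every $v\in\mathcal V_b$ the two sides of \eqref{bal} change by one and the same term $\int_{D\setminus S}v\dd\nu_{M_-}$, and hence $\nu_u\curlyeqprec_{S,\mathcal V_b}\nu_M$ if and only if $\nu_{u'}\curlyeqprec_{S,\mathcal V_b}\nu_{M_+}$. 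Thus I may assume $M\in\sbh(D)$ and keep the continuity of $M=M_+$ present in Theorem~\ref{th:cr}; this continuity is used only to ensure that $\nu_M$ puts no mass on polar sets, which makes $\int_{D\setminus S}v\dd\nu_M>-\infty$ for every $v\in\mathcal V_b$ (recall that $v\leq b$ on $D\setminus S$ and that $v=-\infty$ only on a polar subset of $D\setminus S_0$).

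Next I would dispose of a neighbourhood of $S$, first recording two facts about any $v\in\mathcal V_b$: (a) $v\leq b$ on all of $D\setminus S$ --- this follows from condition 3) and upper semicontinuity on $\clos(\Int S_0)\setminus S$, and from the maximum principle (with $v\not\equiv+\infty$ and with boundary limits $\leq b$ on $\partial(\Int S_0)$ by 3), $=0$ on $\partial D$ by 1)) on $D\setminus\clos(\Int S_0)$; (b) $\nu_v(K)\leq c(K)<\infty$ uniformly in $v$ for every compact $K$ contained in a region where $0\leq v\leq b$, via the standard inequality $\nu_v(\clos B(z_0,r/2))\leq\frac1{\log 2}\bigl(\tfrac1{2\pi}\int_0^{2\pi}v(z_0+re^{i\theta})\dd\theta-v(z_0)\bigr)$. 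Then, fixing a smooth compact $\widehat S$ --- a finite union of closed disks, with radii chosen so that $u$ is bounded on $\partial\widehat S$ --- with $S\subset\Int\widehat S\subset\widehat S\subset\Int S_0$, I split $D\setminus S=(\widehat S\setminus S)\sqcup(D\setminus\widehat S)$. On $\widehat S\setminus S\subset S_0\setminus S$ one has $|v|\leq b$ and $(\nu_u+\nu_M)(\widehat S)<\infty$, so $\bigl|\int_{\widehat S\setminus S}v\dd(\nu_u-\nu_M)\bigr|\leq b\,(\nu_u+\nu_M)(\widehat S)=:C_1$, a bound independent of $v$; it remains to bound $\int_{D\setminus\widehat S}v\dd\nu_u-\int_{D\setminus\widehat S}v\dd\nu_M$ from above by a constant $C_2=C_2(u,M,D,\widehat S,b)$.

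For this main part I would take a smooth exhaustion $\Omega_1\Subset\Omega_2\Subset\cdots\uparrow D$ with $\clos S_0\subset\Omega_1$, put $G_j:=\Omega_j\setminus\widehat S$ (a Greenian domain with $\clos G_j\subset D\setminus S$), and use on $G_j$ the Riesz representations $s=H^s_j-\mathcal G_j[\nu_s]$ for $s\in\{u,M,v\}$, where $\mathcal G_j[\mu](z):=\int_{G_j}g_{G_j}(z,\zeta)\dd\mu(\zeta)$ and $H^s_j$ is the harmonic function on $G_j$ with boundary values $s$ on $\partial G_j$. From $u\leq M$ on $\partial G_j$ one has $H^u_j\leq H^M_j$ on $G_j$; substituting the representation of $v$ into $\int_{G_j}v\dd\nu_u$ and into $\int_{G_j}v\dd\nu_M$, applying Fubini (symmetry of $g_{G_j}$), and then using $M-u\geq0$ on $G_j$ and $\nu_v\geq0$, one arrives at
$$\int_{G_j}v\dd(\nu_u-\nu_M)\ \leq\ \int_{G_j}H^v_j\dd(\nu_u-\nu_M)\ +\ \int_{G_j}(H^M_j-H^u_j)\dd\nu_v .$$
If the right-hand side is bounded above uniformly in $j$ and in $v\in\mathcal V_b$, then letting $j\to\infty$ (so that $G_j\uparrow D\setminus\widehat S$) produces the required $C_2$, the resulting inequality being understood in the obvious extended sense.

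The main obstacle is precisely this last uniform bound --- which in the case $n=1$ is the substance of \cite[Theorem~1]{KhaKhaE19}. Rewriting $\int_{G_j}H^v_j\dd\nu_s$ ($s=u,M$) as an integral over $\partial G_j$ (legitimate since $H^v_j$ is harmonic on $G_j$), I expect the contribution of $\partial\Omega_j$ to tend to $0$ as $j\to\infty$ because $v\to0$ at $\partial D$ by 1) and $v\geq0$ on $\partial\Omega_j$ for large $j$ by 2), while the contribution of the fixed curve $\partial\widehat S$ stays $\lesssim b$ because $|v|\leq b$ there; here the non--polarity of $\partial D$ --- the standing hypothesis on $\partial D$ --- is what keeps the harmonic measures and the Green functions $g_{G_j}$ under control in the limit. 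As for $\int_{G_j}(H^M_j-H^u_j)\dd\nu_v$, where $H^M_j-H^u_j\geq0$ is harmonic with boundary data $M-u$ that is bounded on $\partial\widehat S$ by the choice of $\widehat S$, the decisive point should be that near $\partial D$ the measure $\nu_v$ is locally bounded by $b/\log 2$ (fact (b), since $0\leq v\leq b$ there), which together with the decay of $v$ and the continuity of $M$ should give a uniform bound. Carrying these two estimates out rigorously --- possibly choosing the exhaustion adapted to $M$, and tracking harmonic measures as $j\to\infty$ --- is the technical core of the proof, and it is where conditions 1), 2), 3), the continuity of $M$, and the hypothesis on $\partial D$ are genuinely used.
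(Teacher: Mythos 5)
Your opening reduction --- replacing $u\leq M$ by $u+M_-\leq M_+$, noting that $\nu_{u+M_-}=\nu_u+\nu_{M_-}$ and $\nu_{M_+}=\nu_M+\nu_{M_-}$, and cancelling the common term $\int_{D\setminus S}v\dd\nu_{M_-}$ on both sides of \eqref{bal} --- is exactly the paper's argument, and in fact it is essentially the \emph{entire} proof the paper gives for Theorem~\ref{th_n}: after this reduction the paper simply declares that the rest repeats verbatim the scheme of the proof of \cite[Main Theorem 1]{KhaKhaE19}, with $u+M_-$ in the role of $\ln|f|$ and $M_+$ in the role of $M$. (Both you and the paper pass over the legitimacy of cancelling $\int v\dd\nu_{M_-}$ when that integral is not finite. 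Also, Theorem~\ref{th_n} does not assume $M_+\in C(D)$ --- that hypothesis belongs to Theorem~\ref{th:cr} --- so you should not import continuity, even if only to secure integrability.)

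Where you diverge is in then trying to reprove the cited subharmonic-weight case from scratch, and that attempt is not closed: the two uniform estimates you defer to the end are the substance of the theorem, and your heuristics for them fail as stated. From $|H^v_j|\leq b$ on $G_j$ you only get $\bigl|\int_{G_j}H^v_j\dd\nu_u\bigr|\leq b\,\nu_u(\Omega_j)$, and $\nu_u(D)$ is in general infinite; equivalently, after sweeping $\nu_u|_{G_j}$ onto $\partial G_j$, the mass landing on $\partial\Omega_j$ equals $\nu_u(G_j)$ and can grow without bound while $\sup_{\partial\Omega_j}|v|\to 0$, and nothing in your sketch couples the decay rate of $v$ at $\partial D$ (over which you have no control, since $v$ ranges over all of $\mathcal V_b$) to the growth rate of $\nu_u(\Omega_j)$. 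The same mismatch affects $\int_{G_j}(H^M_j-H^u_j)\dd\nu_v$: local boundedness of $\nu_v$ near $\partial D$ does not by itself bound an integral over all of $G_j$ against a nonnegative harmonic function that need not decay. This uniform control is precisely what the paper imports by citation from \cite{KhaKhaE19}; if you are permitted the same citation, your first paragraph already completes the proof, but as a self-contained argument the proposal has a genuine gap exactly at the point you yourself flag as the technical core.
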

Доказательство теоремы \ref{th_n} опускаем, поскольку оно по сути  повторяет [2, доказательство теорема 1]. Отметим лишь, что предварительно необходимо выбрать точку $z_0\in \Int S$, в которой  $u(z_0)\neq -\infty$ и $M_{\pm}(z_0)\neq -\infty$, а также оперировать вместо неравенства $u\leq M$ неравенством $u+M_-\leq M_+$,  где в обеих частях субгармонические функции.  Роль $\ln |f|$ из  \cite[доказательство теоремы 1]{KhaKhaE19} будет играть функция $u+M_-$ с $M_+$ вместо $M$.    
Тогда $\nu_{u+M_-}\overset{\eqref{bal}}{\curlyeqprec}_{S,\mathcal V_b} \nu_{M_+}$, откуда по определению \ref{df1}  следует 
$\nu_u\overset{\eqref{bal}}{\curlyeqprec}_{S,\mathcal V_b} \nu_M$.

 Из утверждения {\bf z\ref{DM1}} для последовательности ${\sf Z}\subset D$ существует   $f{\in} \Hol (D)$ с $\Zero_{f}={\sf Z}$,  для которой $|f|{\leq} \exp M$ на $D$.  При  $u:=\ln |f|\leq M$ на $D$
имеем $$\nu_u=\frac1{2\pi}\Delta u=\frac{1}{2\pi} \Delta \ln |f| {=}n_{\sf Z}$$ и по теореме  \ref{th_n}  
получаем $n_{\sf Z}\overset{\eqref{bal}}{\curlyeqprec}_{S,\mathcal V_b} \nu_M$, что по определению  \ref{df1} означает ${\sf Z}\overset{\eqref{bal}}{\curlyeqprec}_{S, \mathcal V_b} \nu_M$. Таким образом, импликация   {\bf z\ref{DM1}}$\Rightarrow${\bf z\ref{DM2}} доказана.
	
Импликация  {\bf z\ref{DM2}}$\Rightarrow${\bf z\ref{DM3}}  очевидна,
поскольку класс тестовых функций $\mathcal V_b$ из  {\bf z\ref{DM2}}\eqref{Vbz2} включает  в себя класс тестовых функций $\mathcal V_b$ из 
{\bf z\ref{DM3}}\eqref{Vbz3}. 

Далее для простоты $\infty\notin D$, т.\,е. $D\subset \CC$. Для точки $z\in \CC$ и числа $t>0$ через $D(z,t)\subset \CC$ обозначаем открытый круг с центром $z$ радиуса $t$,
$\overline D(z,t):=\clos D(z,t)$.  Для доказательства импликации   {\bf z\ref{DM3}}$\Rightarrow${\bf z\ref{DM1}} теоремы \ref{th:cr} будет использованы следующие 
\begin{theorem}\label{th3}
Пусть в условиях \eqref{S0S} без линейной связности
граница 
$\partial D$ неполярная, 
 а также задана мера $\nu\in \Meas^+ (D)$. Если $\nu \overset{\eqref{bal}}
{\curlyeqprec}_{S,\mathcal V_b}\nu_M$ относительно  класса тестовых функций $\mathcal V_b$ из   {\bf z\ref{DM3}}\eqref{Vbz3}, то для любой положительной  функции 
$r\colon D\to \RR$ с ограничениями 
\begin{equation}\label{r}
 \inf_{\overline D(z,t)} r>0 \text{ для любых  $\overline D(z,t)\subset D$}, \quad
\overline D\bigl(z,r(z)\bigr)\subset D\text{ для всех $z\in D$}
 \end{equation}   
найдется   такая функция  $u\in \sbh(D)$ с мерой Рисса $\nu_u=\nu$, что  \begin{equation}\label{*r}
\begin{split}
u(z)&\leq M^{\odot r}(z)\leq M_+^{\odot r}(z)-M_-(z) \quad 
\text{для всех $z\in D$,}\\
\text{где }  \quad M^{\odot r}(z)&:=\frac{1}{2\pi}\int_0^{2\pi}M\bigl(z+r(z)e^{i\theta}\bigr)\dd \theta.
\end{split}
\end{equation} 
Если $M_+\in C(D)$, то функцию $r$ в   \eqref{r}--\eqref{*r} можно выбрать так, что $u
\leq M+1$ на $D$.  
\end{theorem} 

\begin{corollary}\label{cor} Если $D$ --- $(k+1)$-связная область с неполярной   границей $\partial D$, $k<+\infty$,  ${\sf Z}\curlyeqprec_{S,\mathcal V_b}\nu_M$ относительно класса $\mathcal V_b$ из  {\bf z\ref{DM3}}\eqref{Vbz3} и  $M_+\in C(D)$, то найдется   число $a<k$, для которого 
$\sf Z$ --- последовательность нулей для  пространства  $ \Hol \bigl(D,M+a^+\ln^+|\cdot| \bigr)$ вида \eqref{HolDM}, где $a^+:=\max \{ 0,a \}$ и  $\ln^+:=\max \{0,\ln\}$, а  при $\clos D\neq \CC_{\infty}$ 
 имеем $a:=0$. 
\end{corollary}
Для областей $D$ из теоремы  \ref{th_n} по следствию \ref{cor} имеем $a^+=0$ или $a=0$, что сразу доказывает  импликацию {\bf z\ref{DM3}}$\Rightarrow${\bf z\ref{DM1}}
теоремы \ref{th:cr}.

\begin{proof}[Доказательство следствия\/ {\rm \ref{cor}}] По определению \ref{df1} и  условию  ${\sf Z}\curlyeqprec_{S,\mathcal V_b}\nu_M$ с  $M_+\in C(D)$ из теоремы  \ref{th3} следует существование функции $u\in \sbh (D)$ с мерой Рисса $\nu_u=n_{\sf Z}$, удовлетворяющей неравенству $u\leq  M^{*r}\leq M+1$ на $D$ при подходящем выборе функции $r\in C(D)$. Имеет место представление
$u=\ln |f_{\sf Z}|+h$, где $f_{\sf Z}\in \Hol (D)$ с $\Zero_{f_{\sf Z}}={\sf Z}$, а $h\in \Har(D)$. В \cite[лемма 2.1, (2.15)]{Kha07} доказано, что для областей $D$ рассматриваемого в следствии \ref{cor} типа  для  любой функции $h\in \Har (D)$ можно подобрать число $a$ требуемого в следствии \ref{cor} вида, для которого найдется функция $g\in \Hol(D)$ с $\Zero_g=\varnothing$, удовлетворяющая неравенству $\ln \bigl|g(z)\bigr|\leq h(z)+a^+\ln^+|z|$, $z\in D$. Теперь функция 
$f:=f_{\sf Z}g$ с $\Zero_f=\Zero_{f_{\sf Z}}={\sf Z}$ 
удовлетворяет неравенству
$$\ln |f|=\ln |f_{\sf Z}|+\ln |g|\leq \ln |f_{\sf Z}|+h+a^+\ln^+|\cdot|=u+a^+\ln^+|\cdot|\leq M+1+a^+\ln^+|\cdot|$$  на $D$ и ${\sf Z}$ --- последовательность нулей для   $ \Hol \bigl(D,M+a^+\ln^+|\cdot| \bigr)$.  
\end{proof}

\paragraph{Доказательство теоремы\/ { \ref{th3}.}}
Всегда существует функция $u_{\nu}\in \sbh (D)$ с мерой Рисса $\nu$ \cite{Rans}, \cite{HK}. Не умаляя общности, можем считать, что $0\in \Int S$ и  
$u_{\nu}(0)\neq -\infty$, $M_{\pm}(0)\neq -\infty$. 
\begin{definition}[{\cite[3.1]{Kha07}--\cite[определения 1,2]{Kha03}}]\label{dfAS}
Мера $\mu\in \Meas_{\comp}^+(D)$ называется {\it мерой Аренса\,--\,Зингера\/} для  точки $0\in D$, если $$h(0)=\int h\dd \mu \quad 
\text{для любой функции  $h\in \Har(D)$.}$$ 
Класс всех мер Аренса\,--\,Зингера для  точки $0\in D$ обозначаем через 
$AS_{0}(D)$.  

Функция $V\in \sbh\bigl(D\setminus \{0\}\bigr)$ --- {\it потенциал Аренса\,--\,Зингера для точки\/ $0\in D$ с единичной нормировкой,\/} если  $V\equiv 0$ вне некоторого $S_V\subset \clos S_V\subset D$ и  
\begin{equation}\label{1}
V(z)= -\ln |z|+O(1)\quad\text{при $z\to 0$}.
\end{equation}  
Класс  всех таких потенциалов Аренса\,--\,Зингера для точки\/ $0\in D$ с единичной  нормировкой \eqref{1} обозначаем через  $PAS_{0}^1(D)$. 
\end{definition}

 Через $\Meas_{\infty}^+(D) $ обозначаем   подкласс абсолютно непрерывных мер $\mu\in \Meas^+(D)$ с плотностью $m\in C^{\infty}(D)$, т.\,е. $\dd \mu=m\dd \lambda$,  где   $\lambda$ --- \textit{мера Лебега.}   

\begin{theoA}[{\cite[\S~1, предложения 1.2--4, теорема двойственности]{Kha03}}]\label{l1} Для любых $u\in \sbh(D)$ с $u(0)\neq -\infty$ и с мерой Рисса $\nu_u=\frac1{2\pi}\Delta u$  и потенциала Аренса\,--\,Зингера 
$V\in PAS_{0}^1(D)$ 	 имеет место  расширенная формула Пуассона\,--\,Йенсена
\begin{equation}\label{PJ}
u(0)+\int_{D} V\dd \nu_u=\int_Du\dd \mu_V , \quad \text{где $\mu_V:=\frac1{2\pi}\Delta V\in AS_{0}(D)$}.
\end{equation}	
При этом для любой области $U_0\subset \clos U_0\subset D$ с  $0\in U_0$  оператор  
$$\frac{1}{2\pi}\Delta\colon V\overset{\eqref{PJ}}{\mapsto} \mu_V$$ определяет биекцию
подкласса потенциалов Йенсена
\begin{subequations}\label{bij}
\begin{align}
\mathcal V(U_0)&:= PAS_0^1(D) \cap \Har \bigl(U_0\setminus \{0\}\bigr)\cap C^{\infty}\bigl(D\setminus \{0\}\bigr) 
\tag{\ref{bij}V}\label{bijV}
\\
\intertext{на подкласс мер Йенсена}
\quad  \mathcal M (U_0)&:= AS_0(D)\cap \Meas_{\comp}(D\setminus U_0)\cap \Meas^+_{\infty}(D).
\tag{\ref{bij}M}\label{bijM}
\end{align}
\end{subequations}
\end{theoA}

\begin{lemma}\label{l2} Для любой области\/ $U_0$ при 
$$0\in \Int S\subset \clos S \subset \Int S_0\subset \clos S_0\subset U_0\subset \clos U_0\subset D$$ 
найдётся число $B>0$, для которого имеет место включение
\begin{equation}\label{Vv}
\mathcal V(U_0) \overset{\eqref{bijV}}{\subset} \mathcal V_B:=
\sbh_{00}^{\pm}(D\setminus S ,S_0;\leq B)\cap C^{\infty} (D\setminus S ).
\end{equation}	
\end{lemma}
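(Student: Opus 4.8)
\smallskip
\noindent\emph{Idea of proof.} Fix $V\in\mathcal V(U_0)$; I would verify $V\in\mathcal V_B$ one defining property at a time, the only genuine point being that the bound $B$ in condition~3) of the class $\sbh_0^{\pm}$ can be chosen independently of $V$. The three ``soft'' requirements follow at once from $V\in PAS_0^1(D)$. Since $0\in\Int S$ we have $D\setminus S\subset D\setminus\{0\}$, so $V$ restricted to $D\setminus S$ inherits membership in $\sbh(D\setminus S)\cap C^{\infty}(D\setminus S)$ from $V\in\sbh(D\setminus\{0\})\cap C^{\infty}(D\setminus\{0\})$. As $V\equiv0$ on $D\setminus S_V$ for some compact $S_V$ with $\clos S_V\subset D$, the function $V$ vanishes on a neighbourhood of $\partial D$ in $D$; this gives property~1) ($\lim_{z'\to z}V(z')=0$ for $z\in\partial D$) and, together with $V\equiv0\geq0$ off $S_V$, property~2) and the ``$00$''-condition, with $S_v:=\clos S_V$. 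Consequently $V\in\sbh^{\pm}_{00}(D\setminus S,S_0;\leq B)\cap C^{\infty}(D\setminus S)$ whenever $B\geq\sup_{S_0\setminus S}|V|$, and the whole lemma reduces to producing one $B$ that serves all $V\in\mathcal V(U_0)$ simultaneously.

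\smallskip
For this, by Theorem~A the Riesz measure $\mu_V:=\frac1{2\pi}\Delta V$ lies in $\mathcal M(U_0)\overset{\eqref{bijM}}{\subset}AS_0(D)\cap\Meas_{\comp}(D\setminus U_0)$; thus $\mu_V$ is a probability measure (apply the Arens--Singer identity to $h\equiv1$) supported in $D\setminus U_0$, and $V$ is harmonic on $U_0\setminus\{0\}$ with $V(z)=-\ln|z|+O(1)$ as $z\to0$. Hence $w_V(z):=V(z)+\ln|z|$ extends across $0$ to a function harmonic on all of $U_0$, and since on the compact set $S_0\setminus S\subset\clos S_0\setminus\Int S$ the quantity $|\ln|z||$ is bounded by a constant depending only on $S,S_0$, it is enough to bound $\sup_{\clos S_0}|w_V|$ independently of $V$.

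\smallskip
To do this in the hyperbolic case --- where, under the standing hypothesis of Theorem~\ref{th3}, $\partial D$ is regular --- I would use the representation
$$
V(z)=g_D(z,0)-\int_D g_D(z,\zeta)\,d\mu_V(\zeta),\qquad z\in D,
$$
$g_D$ being the Green function of $D$: the difference of the two sides is harmonic on $D\setminus\{0\}$ (both sides have Riesz measure $\mu_V$ there), equals $-\ln|z|+O(1)$ at $0$, and tends to $0$ at $\partial D$ --- this last point is precisely where regularity is used, to guarantee that the Green potential of the compactly supported $\mu_V$ vanishes at $\partial D$ and that the defining property of $g_D(\cdot,0)$ applies --- so by uniqueness the difference is $0$. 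Since $g_D\geq0$ and $\mu_V\geq0$, this yields $V\leq g_D(\cdot,0)$, hence $w_V\leq\gamma_D$ on $U_0$, where $\gamma_D:=g_D(\cdot,0)+\ln|\cdot|$ is harmonic near $0$ and bounded above on $U_0$ by some $C_1=C_1(D,U_0)$. Evaluating the identity at $z=0$ (legitimate since $0\notin\supp\mu_V$) and using that, for $\zeta\in\supp\mu_V\subset D\setminus U_0$, the maximum principle bounds $g_D(0,\zeta)$ by $C_2:=\max\{g_D(0,\zeta):|\zeta|=\operatorname{dist}(0,\partial U_0)\}<\infty$, one gets $w_V(0)\geq\gamma_D(0)-C_2$. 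Finally $C_1-w_V$ is nonnegative and harmonic on $U_0$, so Harnack's inequality on the compact subset $\clos S_0$ of $U_0$ gives $\sup_{\clos S_0}(C_1-w_V)\leq C_H\,(C_1-w_V(0))\leq C_H\,(C_1-\gamma_D(0)+C_2)$; combined with $w_V\leq C_1$ this bounds $\sup_{\clos S_0}|w_V|$, and hence $B$, uniformly in $V$.

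\smallskip
The main obstacle is exactly this uniformity: $\mathcal V(U_0)$ is a priori a large family (the supports $S_V$ of its members are unconstrained), so the estimate must exploit the rigidity packaged in $PAS_0^1(D)$ --- a normalised logarithmic pole at $0$, Riesz mass confined to $D\setminus U_0$, and vanishing near $\partial D$ --- and the delicate input is the boundary behaviour that licenses the Green-potential representation. For $D$ parabolic with one-point boundary the same scheme is to be carried out with the logarithmic potential of $\mu_V$ in place of the Green potential, using that the Arens--Singer property annihilates all positive-order moments of $\mu_V$.
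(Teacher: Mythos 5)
Your argument is correct in substance, and its first half (reduction of the lemma to a uniform two-sided bound for $V$ on $S_0\setminus S$, plus the upper bound via $V\leq g_D(\cdot,0)$) coincides with the paper's. Where you genuinely diverge is the uniform \emph{lower} bound. The paper works with the logarithmic representation $V=p_{\mu_V}-\ln|\cdot|$, where $p_{\mu_V}(z)=\int\ln|z-z'|\dd\mu_V(z')$, invokes the quantitative estimate of \cite[Предложение~2.6]{KhaBai16} giving $p^{\bullet r}_{\mu_V}\geq\mu_V(\CC)\ln(r/\sqrt{e})$ for areal means over disks of radius $r$, and then converts the mean into the pointwise value on $S_0\setminus S$ by harmonicity of $p_{\mu_V}$ off $\supp\mu_V\subset D\setminus U_0$, choosing $2r$ below the distances from $0$ to $\partial S$ and from $S_0$ to $\partial U_0$. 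You instead pass to the harmonic extension $w_V=V+\ln|\cdot|$ on $U_0$, obtain the one-point bound $w_V(0)\geq\gamma_D(0)-C_2$ from the Green representation together with the fact that $\mu_V$ is a probability measure carried by $D\setminus U_0$, and propagate it to $\clos S_0$ by Harnack applied to $C_1-w_V\geq 0$. Both routes produce a constant depending only on $S,S_0,U_0,D$; the paper's needs only the inequality $V\leq g_D(\cdot,0)$ (quoted from \cite[(2.15)]{KhaKhaF19_III}) plus one ready-made potential bound and yields an explicit $B$, while yours is self-contained modulo the full Green representation and a Harnack constant for the pair $(\clos S_0,U_0)$.

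Two small corrections. The standing hypothesis of Theorem~\ref{th3} is that $\partial D$ is \emph{non-polar}, not regular; so your uniqueness argument for the representation $V=g_D(\cdot,0)-\int g_D(\cdot,\zeta)\dd\mu_V(\zeta)$ must use boundary limits only at regular (quasi-all) boundary points together with the extended maximum principle --- standard, but not literally what you wrote. Consequently your closing remark about a separate ``parabolic'' case is moot here: non-polarity of $\partial D$ is exactly the condition under which $g_D$ exists, so the Green-function scheme covers the whole scope of the lemma.
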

\begin{proof}[Доказательство леммы\/ {\rm \ref{l2}}]
Для области $D$ с {\it неполярной границей\/} $\partial D$ существует {\it функция Грина\/} 
$g_D$ \cite{Rans},\cite{HK}, и  $V\leq g_D(\cdot , 0)$ для всех $V\in PAS_0^1(D)$
\cite[(2.15)]{KhaKhaF19_III},   откуда, ввиду $0\in \Int S$, получаем {\it оценку сверху\/} $$\sup\limits_{S_0\setminus S} V\leq \sup\limits_{S_0\setminus S} g_D(\cdot,0)=:B',$$ где число $B'>0$ зависит только от $S,S_0,D$. 
Для $V\in PAS_0^1$ имеет место {\it представление\/} 
$$V(z)\overset{\eqref{PJ}}{=}p_{\mu_V}(z)-\ln |z|, \quad z\in D\setminus\{0\},
\quad\text{где } p_{\mu_V}(z):=\int \ln |z-z'| \dd \mu_V(z'),$$
а  $\mu_V$ из \eqref{PJ} --- {\it вероятностная мера\/} \cite[предложение 1.4]{Kha03}.
Выберем число $r>0$ так, что $2r$ --- это минимум двух расстояний: от $0$ до $\partial S$ и 
от $S_0$ до $\partial U_0$. В \cite[предложение 2.6]{KhaBai16} для 
усреднений по кругам
$$p^{\bullet r}_{\mu_V}(z):=\frac{1}{\pi r^2}\int_{\overline{D}(z,r)} p_{\mu_V}(z')\dd \lambda (z')
$$ 
получена оценка снизу $$p^{\bullet r}_{\mu_V}(z)\geq \mu_V(\CC)\ln ({r}/{\sqrt{e}})=
\ln ({r}/{\sqrt{e}})=:B''\quad \text{для всех $z\in \CC$}. 
$$ Но  согласно выбору $r$ и ввиду $\supp \mu_V\overset{\eqref{bijM}}{\subset} D\setminus U_0$ из гармоничности  потенциала $p_{\mu_V}$ вне $\supp \mu_V$ \cite{Rans}, \cite{HK} имеем 
$$p_{\mu_V}(z)=p^{\bullet r}_{\mu_V}(z)\geq B''\quad\text{для всех $z\in S_0\setminus S$}.$$ 
Отсюда $$V(z)\geq B''+\inf_{z'\in S_0\setminus S}  \ln (1/{|z'|})=:B'''
\quad\text{для всех $z\in S_0\setminus S$},$$ где по построению  $B'''\in \RR$  зависит только от $r, S,S_0,U_0$, а стало быть лишь от взаимного расположения точки $0$ и  множеств $S,S_0,U_0$. Таким образом, в соответствии с определением  \ref{dfAS} получаем  включение  \eqref{Vv} с $B:=\max\{B',|B'''|\}>0$. 
\end{proof}

По определению \ref{df1} для произвольно большого числа $B>0$,  
 домножив обе части неравенства \eqref{bal} на $B/b>0$,  в теореме \ref{th3}
 класс $\mathcal V_b$  можно заменить на класс  $\mathcal V_B$ из \eqref{Vv} и по лемме \ref{l2} из условия $\nu \overset{\eqref{bal}}{\curlyeqprec}_{S,\mathcal V_B} \nu_M$ 
 следует $\nu \overset{\eqref{Vv}}{\curlyeqprec}_{S,\mathcal V(U_0)} \nu_M$.
Последнее по определению \ref{df1} аффинного выметания в \eqref{bal}, применяя  расширенную формулу Пуассона\,--\,Йенсена \eqref{PJ} к $u_\nu$ и $M_\pm$, 
а также  биекцию \eqref{bij}, можно переписать  в виде
\begin{equation}\label{Mu}
\int u_\nu \dd \mu\leq \int M \dd \mu +\underbrace{(BC/b+u(0)-M_+(0)+M_-(0))}_{c}\quad 
\text{\it для всех $\mu \overset{\eqref{bij}}{\in} \mathcal M(U_0)$},
\end{equation}
  где $c\in \RR$ --- некоторая постоянная.
 
\begin{theoB}[{\rm очень частный случай \cite[следствие 8.1]{KhaRozKha19} при $H=\Har(D)$}]
Если для  некоторого числа $c\in \RR$ выполнено  \eqref{Mu}, то для любой функции $r$, удовлетворяющей условиям \eqref{r}, найдутся такие функция  $h\in \Har(D)$
и   положительная функция $\widehat{r}\leq r$ из класса  $C^{\infty}(D)$, что имеет место неравенство 
\begin{equation}\label{uM}
u_\nu+h\leq M^{\circledast\widehat{r}}\in C^{\infty}(D)\quad\text{на $D$}, 
\end{equation}
где, по построению  \/ {\rm  \cite[(8.3--6), (8.10)]{KhaRozKha19},  \cite[(2.18--19)]{KhaKhaF19_III}}, 
$M^{\circledast\widehat{r}}(z)$ --- <<скользящие сжимающиеся>> сглаживающие  усреднения по некоторым вероятностным мерам 
$$\alpha^{(\widehat r(z))}\in \Meas^+_{\infty}\bigl(\overline D(z,\widehat r(z))\bigr),$$
 полученным сдвигом, сжатием и нормировкой некоторой единой  аппроксимативной единицы $a\in C^\infty(\CC)$, зависящей только от модуля $|\cdot|$, с  
носителем $\supp a\subset \overline{D}(0,1)$.  
\end{theoB}
По  теореме B требуемой  функцией выбираем  
\begin{equation}\label{Munu}
u\overset{\eqref{uM}}{:=}u_\nu+h\overset{\eqref{uM}}{\leq} M^{\circledast\widehat{r}}=M_+^{\circledast\widehat{r}}-M_-^{\circledast\widehat{r}}\leq 
M_+^{\circledast\widehat{r}}-M_-\quad \text{на $D$},
\end{equation}
 поскольку для $M_-\in \sbh (D)$ имеем 
$M_-\leq M_-^{\circledast\widehat{r}}$ на $D$.  Из $M_+\in \sbh (D)$ следует 
$$
M_+^{\circledast\widehat{r}}\overset{\eqref{*r}}{\leq} M_+^{\odot\widehat{r}}\leq 
M_+^{\odot{r}}\quad \text{на $D$},$$
что дает неравенство $u\leq M_+^{\odot{r}}-M_-$ на $D$. Если $M_+\in C(D)$, 
то в силу локально равномерной непрерывности функции $M_+$ функцию $r$ в ограничениях  \eqref{r} можно выбрать столь малой, что $M_+^{\odot{r}} \leq M+1$ на $D$. Теорема \ref{th3} доказана.

\renewcommand{\refname}{\begin{center}{\normalsize \sc Литература}\end{center}}

Башкирский государственный университет

e-mail: khabib-bulat@mail.ru 


\end{document}